\newtheorem{thm}{Theorem}
\newtheorem{cor}[thm]{Corollary}
\newtheorem{lem}[thm]{Lemma}
\newcommand{\Z}{{\mathbb Z}}
\newcommand{\N}{{\mathbb N}}
\newcommand{\ub}[2]{\underset{#1}{\underbrace{#2}}}
\def\1{{\bf 1}}
\def\N{\mathbb N}
\title{Can a single migrant per generation rescue a dying population?
\footnote{Keywords: Markov chain, binomial distribution, demography, population biology.}}
\begin{document}

\author{
Iddo Ben-Ari\footnote{Department of Mathematics, University of Connecticut, Storrs, CT 06269-1009, USA; iddo.ben-ari@uconn.edu}
\and
Rinaldo B. Schinazi \footnote{Department of Mathematics, University of Colorado, Colorado Springs, CO 80933-7150, USA;
rinaldo.schinazi@uccs.edu}
}

\maketitle

\begin{abstract}
We introduce a population model to test the hypothesis that even a single migrant per generation may rescue a dying population. Let $(c_k:k\in\N)$ be a sequence of real numbers in $(0,1)$. Let $X_n$ be a size of the population at time $n\geq 0$. Then, $X_{n+1}=X_n - Y_{n+1}+1$,
where the conditional distribution of 
$Y_{n+1}$ given $X_n=k$ is a binomial random variable with parameters $(k ,c(k))$. We assume that $\lim_{k\to\infty}kc(k)=\rho$ exists. If $\rho<1$ the process is transient with speed $1-\rho$ (so yes a single migrant per generation may rescue a dying population!) and if $\rho>1$ the process is positive recurrent.  In the critical case $\rho=1$ the process is recurrent or transient according to how $k c(k)$ converges to $1$. When $\rho=0$ and under some regularity conditions, the support of the increments is eventually finite.   
\end{abstract}
\section{The model}
A long standing subject in ecology is the preservation of endangered populations. Many populations are endangered by the continuing fragmentation of their habitat. This may trigger low fertility due to the lack of genetic flow between different populations of the same species. 
In this work we introduce a population model to test the hypothesis that even a single migrant per generation may rescue a dying population.

We now describe our model. Consider a discrete-time stochastic population dynamics process ${\bf X}=(X_n:n\in\Z_+)$. Here $X_n$ represents the number of individuals alive at time $n$ and is a positive integer. Conditioned on $X_n = k$, at time $n+1$: (i)  each individual is alive  with probability $1-c(k)$ or dies with probability $c(k)$, independently of each other; (ii)  We add a single individual (i.e. a migrant) to the population. Hence, 
\begin{equation}
\label{eq:increments} 
X_{n+1}=X_n - Y_{n+1}+1,
\end{equation} 
where the conditional distribution of 
$Y_{n+1}$ given $X_n=k$ is a binomial random variable with parameters $(k , c(k))$ that we denote by $\mbox{Bin}(k , c(k))$. 

Observe that {\bf X} is a Markov chain on $\mathbb N$. We will assume that $0<c(k)<1$ for all $k\in \mathbb N$. This makes {\bf X} an irreducible Markov chain.
We will also assume that the following (possibly infinite) limit exists,
 \begin{equation}
 \label{eq:rho}
 \lim_{k\to\infty} k c(k) = \rho\in [0,+\infty].     
 \end{equation}
 Note that individuals die with a probability which depends on the size of the population. When $\rho \in (0,\infty)$, the Poisson limit for a binomial distribution implies that the distribution of the increments from $k$ converges to $1-\mbox{Pois}(\rho)$ as $k\to\infty$. This limit suggests that when $\rho<1$ the process is transient and when $\rho>1$ the process is recurrent and plays a central role in some of our proofs. Yet, our results require a much more refined analysis. The critical case $\rho=1$, may be recurrent or transient depending on the sequence $(c(k):k\in\N)$ and a good example to keep in mind is 
$$c(k)=\frac{1}{k^a+1},$$
where $a>0$ is a parameter. In this case ${\bf X}$ is positive recurrent for $a<1$ and transient for $a\geq 1$ while null recurrence is achieved for no value of $a$. See Section \ref{sec:example} and Figure \ref{fig:phase}.
We would like to point out that the notion of ``one migrant per generation" appears in theoretical ecology as a rule to maintain genetic diversity in a population, see \cite{Mills}. Our model does not track the genetic make up of the population but our results will show that one migrant per generation may be enough to rescue a population demographically, see Figure \ref{fig:phase}. 
Along these lines, laboratory experiments with insects have been performed to shed light between the interplay of genetic and demographic rescues, see \cite{Hufbauer}.

Mathematically, the process is in the same class as birth and death models with catastrophes, see \cite{Brockwell} and \cite{Brockwell et al.}. It is also closely related to  population-dependent branching processes \cite{klebaner}, and branching processes with immigration,  see \cite{Heyde}, \cite{Pakes} and \cite{Seneta}.  Specifically, this model is a generalization of random walks with catastrophes first introduced by \cite{Neuts}, see also \cite{Ben-Ari}. In the latter two works the probability of dying is independent of the population size. The introduction of population-dependent probability of death leads to a  variety of behaviors. For instance, the model with constant $c$ is always positive recurrent while the present model can be recurrent or transient. 

\section{Recurrence and Transience} 
\label{sec:rec_transience}
Our first result provides a coarse description of the process according to the value of $\rho$: 
 \begin{thm}
 \label{th:transition}
 \begin{enumerate} 
 \item If $\rho>1$ then ${\bf X}$ is positive recurrent. 
 \item  If $\rho<1$ then ${\bf X}$ is transient.
 \end{enumerate} 
 \end{thm}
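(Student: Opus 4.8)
The natural tool here is the Foster–Lyapunov criterion (via drift conditions). For part (1), $\rho>1$, I would look for a Lyapunov function $V$ with bounded negative drift outside a finite set; for part (2), $\rho<1$, I would look for a Lyapunov function that witnesses transience (a bounded $V$ that is superharmonic outside a finite set, escaping the classical Foster criterion). Before invoking anything fancy, though, note the one-step drift is completely explicit: given $X_n=k$,
\[
\mathbb{E}[X_{n+1}-X_n \mid X_n=k] = 1 - kc(k) \xrightarrow[k\to\infty]{} 1-\rho.
\]
So when $\rho>1$ the drift is eventually bounded above by a negative constant, and when $\rho<1$ it is eventually bounded below by a positive constant. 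This is exactly the setting where a linear (or near-linear) Lyapunov function should work, modulo the usual care that the chain lives on $\mathbb{N}$ and the increments are not bounded below (they can be as negative as $-k+1$).

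For part (1): take $V(k)=k$. Outside a finite set $F=\{1,\dots,k_0\}$ chosen so that $kc(k)\ge 1+\varepsilon$ for $k>k_0$, we get $\mathbb{E}[V(X_{n+1})\mid X_n=k]-V(k) = 1-kc(k)\le -\varepsilon$, and $V$ has bounded jumps in the upward direction (increments are at most $+1$), so Foster's theorem gives positive recurrence directly; irreducibility was already noted. One should double-check that $V$ is finite on $F$ and the chain can't fall off $\mathbb{N}$ — but increments are $1-Y_{n+1}$ with $Y_{n+1}\le k$, so $X_{n+1}\ge 1$ automatically, fine. I expect part (1) to be essentially a one-line application once $k_0$ is fixed; the only subtlety is making sure $\rho=\infty$ is covered, which it is since $kc(k)\to\infty\ge 1+\varepsilon$ eventually as well.

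For part (2): here the obstacle is real, because the increments are heavily left-skewed (bounded by $+1$ above, unbounded below), so a bounded superharmonic test function has to be chosen with care — a crude choice like $V(k)=1/k$ or $V(k)=r^k$ may fail to be superharmonic because of the rare large downward jumps. I would try $V(k) = \beta^{k}$ for a suitable $\beta\in(0,1)$, and compute
\[
\mathbb{E}[V(X_{n+1})\mid X_n=k] = \beta^{k+1}\,\mathbb{E}[\beta^{-Y_{n+1}}] = \beta^{k+1}\bigl(1-c(k)+c(k)\beta^{-1}\bigr)^{k} = \beta\cdot V(k)\cdot\bigl(1 + c(k)(\beta^{-1}-1)\bigr)^{k}.
\]
Since $(1+c(k)(\beta^{-1}-1))^{k}\to e^{\rho(\beta^{-1}-1)}$, the chain is transient-favorable provided $\beta\,e^{\rho(\beta^{-1}-1)}<1$ for some $\beta\in(0,1)$; writing $\beta=e^{-t}$, this is $e^{-t}e^{\rho(e^{t}-1)}<1$, i.e. $\rho(e^{t}-1)<t$, which has a solution with $t>0$ precisely when $\rho<1$ (take $t$ small: $\rho(e^t-1)\approx \rho t<t$). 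So fix such a $\beta$, choose $k_0$ large enough that $\beta(1+c(k)(\beta^{-1}-1))^{k}\le q<1$ for all $k>k_0$; then $V$ is bounded, $V(k)\to 0$ as $k\to\infty$ but $V$ is bounded below away from $0$ on $\{1,\dots,k_0\}$, and $\mathbb{E}[V(X_{n+1})\mid X_n=k]\le V(k)$ for $k>k_0$. The standard transience criterion (a bounded $V$ with $\inf_{k\le k_0}V(k) > \sup_{k>k_0}V(k)$ is impossible, so instead one uses: $V$ superharmonic off $F$, $V\to 0$ at infinity, $V>\max_F V$... ) — more precisely I would use the version stating that the chain is transient if there is a nonnegative $V$, a finite set $F$, with $\mathbb{E}[V(X_{n+1})\mid X_n=k]\le V(k)$ for $k\notin F$ and $V(k_1)<\inf_{k\in F}V(k)$ for some $k_1\notin F$; since $V(k)=\beta^k$ is decreasing, any $k_1>\max F$ works. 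The main thing to get right is the uniformity of the estimate $(1+c(k)(\beta^{-1}-1))^{k}\le$ const over $k>k_0$, which follows from $kc(k)\to\rho$ but needs the elementary bound $1+x\le e^x$ to control it cleanly: $(1+c(k)(\beta^{-1}-1))^k\le e^{kc(k)(\beta^{-1}-1)}$, and $kc(k)(\beta^{-1}-1)\to\rho(\beta^{-1}-1)$, so the whole product is at most $\beta e^{\rho(\beta^{-1}-1)}+o(1)<1$. That is the heart of the argument; everything else is bookkeeping.
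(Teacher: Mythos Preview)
Your Part~1 is exactly the paper's argument: Foster's theorem with the linear Lyapunov function $V(k)=k$, noting $E_k[X_1]-k=1-kc(k)\le -\varepsilon$ outside a finite set, and covering $\rho=\infty$ the same way.

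Your Part~2 is correct but takes a genuinely different route.  The paper does \emph{not} use a Lyapunov/superharmonic function for transience; instead it proves a stochastic-domination lemma showing that for $j$ large the increment law $\mbox{Bin}(j,c(j))$ is dominated by a fixed distribution $\mu_{\bar\rho,\epsilon}$ with mean $(1+\epsilon)\bar\rho<1$, and then couples ${\bf X}$ (while it stays above a level $J$) with the transient random walk with i.i.d.\ increments $1-\mu_{\bar\rho,\epsilon}$.  Your approach sidesteps that lemma entirely: the generating-function identity $E_k[\beta^{X_1}]=\beta^{k+1}(1+c(k)(\beta^{-1}-1))^k$ together with $1+x\le e^x$ gives $E_k[V(X_1)]\le \beta\,e^{kc(k)(\beta^{-1}-1)}V(k)$, and choosing $\beta\in(0,1)$ with $\beta e^{\rho(\beta^{-1}-1)}<1$ (possible exactly when $\rho<1$) makes $V(k)=\beta^k$ bounded and superharmonic off a finite set, with $V$ strictly smaller beyond that set---the textbook transience criterion.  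Your argument is shorter and more self-contained; the paper's coupling is more hands-on and yields the domination lemma as a byproduct, though that lemma is not reused elsewhere (the later LLN is proved by a separate Poisson-approximation argument).  The only cosmetic point: when you pass from $\beta e^{\rho(\beta^{-1}-1)}<1$ to a uniform bound for $k>k_0$, say it via ``pick $\rho'<1$ with $\beta e^{\rho'(\beta^{-1}-1)}<1$ and $k_0$ so that $kc(k)\le\rho'$ for $k>k_0$'' rather than the ``$+\,o(1)$'' phrasing.
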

 


The proof of positive recurrence is obtained through a Lyapunov function. The proof of transience uses stochastic domination and a coupling.

\begin{proof}[Proof of Theorem \ref{th:transition}-1]
We first prove part 1 of the  Theorem. 
Suppose $\rho>1$ is finite. We will apply Foster's Theorem see \cite{Bremaud}  [Theorem 1.1, p. 167] with the function $h(x) = x$ and $F=\{i: ic(i)\le \rho-\epsilon \}\cup\{1\}$ where $\epsilon=(\rho-1)/2$. Clearly $F$ is non-empty and finite. Next,  
\begin{align*}
    E_i [h(X_1)] =& E[1+\mbox{Bin}(i,1-c(i))]\\
    =& 1+ i(1-c(i))\\
   =& h(i)- (i c(i)-1).
\end{align*}

For all $i$ this expectation is finite, and for all $i \not\in F$, 
$$E_i [h(X_1)]\leq h(i) - (\rho- \epsilon -1)= h(i) - \epsilon.$$
Thus the conditions of Foster's Theorem hold, completing the proof for $\rho>1$ finite. It is easy to adapt the proof to the case $\rho=+\infty$. We use the same $h$. For fixed $k>1$ we define
$F=\{i: ic(i)<k\}$. As above we get for $i\not \in F$
$$E_i [h(X_1)]=h(i)- (i c(i)-1)\leq h(i) -\epsilon,$$
where $\epsilon=k-1>0$.
 This completes the proof that {\bf X} is positive recurrent for all $\rho>1$.
 \end{proof} 

To prove Theorem \ref{th:transition}-2, we first 
introduce a family of probability distributions indexed by $\bar \rho$ and $\epsilon>0$. Fix $\bar \rho$ in $(\rho,1)$. Let $\epsilon$ be such that $(1+\epsilon)\bar\rho<1$. Note that
 \begin{align*}
     (1+\epsilon) e^{-\bar \rho}>& (1+\epsilon)(1-\bar\rho)\\
     =&1+\epsilon-(1+\epsilon)\bar\rho\\
     >& 1+\epsilon-1\\
     =&\epsilon
 \end{align*}
 
 Hence,
 $(1+\epsilon) e^{-\bar \rho} >\epsilon$. Let 

$$ \mu_{\bar \rho,\epsilon} (k) = \begin{cases} (1+\epsilon) e^{-\bar \rho} - \epsilon & k=0 \\ \frac{1+\epsilon}{k!} \bar \rho^k e^{-\bar \rho} & k\ge 1 \end{cases} $$
Then 
$$ \sum_{k=0}^\infty \mu_{\bar \rho,\epsilon} (k) =(1+\epsilon) e^{-\bar \rho} + (1+\epsilon)(1-e^{-\bar \rho})  - \epsilon =1.$$
Moreover, by comparison with $\mbox{Pois}(\bar \rho)$,  the expectation with respect to $\mu_{\bar \rho,\epsilon}$ is equal to $(1+\epsilon) \bar \rho$. We have the following:
 \begin{lem}
 \label{lem:binomial_bound}
     Suppose $\rho<1$ and let $\bar \rho \in (\rho ,1)$. For $j\geq 1$, let $Z_j$ be a binomial random variable with parameters $(j,c(j))$. Then there exist $\epsilon>0$ and $J\in\N$ such that for all $j\ge J$,
     \begin{align*}  P(Z_j =0) &\ge\mu_{\bar \rho, \epsilon}(0) \\
     P(Z_j=k) &\le \mu_{\bar \rho, \epsilon}(k)\mbox{ for all }k\geq 1.
 \end{align*}  
 \end{lem}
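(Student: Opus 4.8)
The plan is to exploit the Poisson approximation to the binomial together with the hypothesis $jc(j)\to\rho<\bar\rho$ in two stages. First I would fix a target intermediate rate $\rho'$ with $\rho<\rho'<\bar\rho$, and choose $J_0$ so that $jc(j)\le\rho'$ for all $j\ge J_0$; this is where the assumption \eqref{eq:rho} enters. The point of moving to $\bar\rho$ strictly above $\rho'$ is to absorb the second-order error terms in the Poisson approximation: the true binomial point probabilities $P(Z_j=k)=\binom{j}{k}c(j)^k(1-c(j))^{j-k}$ differ from $\mathrm{Pois}(jc(j))$ by amounts that are uniformly small once $j$ is large, and since $\rho'<\bar\rho$ we have room to dominate $\mathrm{Pois}(\rho')$ by a constant multiple of $\mathrm{Pois}(\bar\rho)$ for every $k\ge 1$.

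For the lower bound on $P(Z_j=0)$: we have $P(Z_j=0)=(1-c(j))^j$, and since $jc(j)\to\rho$ with $c(j)\to 0$, the elementary estimate $(1-c(j))^j\to e^{-\rho}>e^{-\bar\rho}$ gives, for $j$ large enough, $P(Z_j=0)\ge e^{-\bar\rho}$. Now recall $\mu_{\bar\rho,\epsilon}(0)=(1+\epsilon)e^{-\bar\rho}-\epsilon$, which tends to $e^{-\bar\rho}$ as $\epsilon\downarrow 0$ and in fact lies below $e^{-\bar\rho}$ for every $\epsilon>0$ (since $e^{-\bar\rho}<1$ forces $(1+\epsilon)e^{-\bar\rho}-\epsilon=e^{-\bar\rho}-\epsilon(1-e^{-\bar\rho})<e^{-\bar\rho}$). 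Hence the inequality $P(Z_j=0)\ge\mu_{\bar\rho,\epsilon}(0)$ holds automatically for all sufficiently small $\epsilon$ once $j$ is large, with no further work.

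For the upper bound on $P(Z_j=k)$, $k\ge 1$: I would use the standard total-variation / pointwise bound for the Poisson approximation — e.g. $\bigl|P(Z_j=k)-e^{-\lambda_j}\lambda_j^k/k!\bigr|\le c(j)\cdot e^{-\lambda_j}\lambda_j^k/k!$-type estimates, or more simply the Le Cam bound $\sum_k|P(Z_j=k)-\mathrm{Pois}(\lambda_j)(k)|\le 2 jc(j)^2\to 0$ — so that $P(Z_j=k)\le \mathrm{Pois}(\lambda_j)(k)+\delta_j$ with $\lambda_j=jc(j)\le\rho'$ and $\delta_j\to 0$. Since $k\mapsto \mathrm{Pois}(\lambda)(k)$ is, for fixed $k\ge1$, increasing in $\lambda$ on $[0,\bar\rho]$ once $\lambda\le k$ (and for the finitely many small $k$ this is handled directly), one gets $\mathrm{Pois}(\lambda_j)(k)\le\mathrm{Pois}(\rho')(k)$; and because $\rho'<\bar\rho$, the ratio $\mathrm{Pois}(\rho')(k)/\mathrm{Pois}(\bar\rho)(k)=(\rho'/\bar\rho)^k e^{\bar\rho-\rho'}$ is bounded by $e^{\bar\rho-\rho'}$ for all $k\ge 1$ and is strictly less than $1$ for $k$ large. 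Fixing $\epsilon>0$ small enough that $1+\epsilon> e^{\bar\rho-\rho'}$ is impossible in general (that quantity exceeds $1$), so instead I would argue: $\mathrm{Pois}(\rho')(k)\le(1+\epsilon)\mathrm{Pois}(\bar\rho)(k)=\mu_{\bar\rho,\epsilon}(k)$ holds for all $k$ outside a finite set $\{1,\dots,K\}$, and on that finite set one chooses $J$ large enough that the additive error $\delta_j$ is smaller than the (strictly positive) gap $\min_{1\le k\le K}\bigl(\mu_{\bar\rho,\epsilon}(k)-\mathrm{Pois}(\lambda_j)(k)\bigr)$; here one uses that for $1\le k\le K$ one can first shrink $\rho'$ toward $\rho$ so that $\mathrm{Pois}(\rho')(k)<(1+\epsilon)\mathrm{Pois}(\bar\rho)(k)$ with room to spare. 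Taking $J$ to be the maximum of the thresholds arising in the three arguments finishes the proof.

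The main obstacle is bookkeeping the quantifiers in the right order: $\epsilon$ and the intermediate rate $\rho'$ must be chosen to create strictly positive gaps $\mu_{\bar\rho,\epsilon}(k)-\mathrm{Pois}(\rho')(k)$ uniformly in $k$, and only then may $J$ be chosen large enough that the Poisson-approximation error is dominated by those gaps, simultaneously for the (infinitely many) large $k$, where exponential decay of $\mu_{\bar\rho,\epsilon}$ does the work, and the finitely many small $k$, where a crude additive bound suffices. None of the individual estimates is hard; the care is entirely in the order of selection.
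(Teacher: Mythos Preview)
Your treatment of $k=0$ is correct and in fact cleaner than the paper's: since $(1-c(j))^j\to e^{-\rho}>e^{-\bar\rho}$ and $\mu_{\bar\rho,\epsilon}(0)=e^{-\bar\rho}-\epsilon(1-e^{-\bar\rho})<e^{-\bar\rho}$, the lower bound follows directly. The paper instead derives the $k=0$ bound from the $k\ge 1$ bounds by complementation.

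The gap is in the $k\ge 1$ argument. You opt for the additive Le Cam estimate $P(Z_j=k)\le \mathrm{Pois}(\lambda_j)(k)+\delta_j$ with $\delta_j\to 0$ independent of $k$, and then try to absorb $\delta_j$ into the slack $\mu_{\bar\rho,\epsilon}(k)-\mathrm{Pois}(\rho')(k)$. For the finitely many $k\le K$ this is fine. But for $k>K$ your only bound is $P(Z_j=k)\le \mathrm{Pois}(\rho')(k)+\delta_j$, and you need $\mathrm{Pois}(\rho')(k)+\delta_j\le \mu_{\bar\rho,\epsilon}(k)$. Since both $\mathrm{Pois}(\rho')(k)$ and $\mu_{\bar\rho,\epsilon}(k)$ tend to $0$ as $k\to\infty$, their difference also tends to $0$, and no fixed $\delta_j>0$ can be absorbed uniformly in $k$. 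Your closing remark that ``exponential decay of $\mu_{\bar\rho,\epsilon}$ does the work'' is exactly backwards: the decay of the \emph{target} is what makes the additive error fatal in the tail.

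What is needed, and what you in fact mention before discarding it, is a \emph{multiplicative} comparison. The paper obtains this by elementary manipulation of the binomial pmf: for $jc(j)<1$,
\[
P(Z_j=k)\le \frac{1}{k!}\,\frac{1}{1-c(j)}\,(jc(j))^k(1-c(j))^j\le (1+2c(j))\,\frac{(jc(j))^k e^{-jc(j)}}{k!},
\]
so $P(Z_j=k)\le (1+\epsilon)\,P(\mathrm{Pois}(jc(j))=k)$ for all $k\ge 1$ once $j$ is large. Since $\lambda\mapsto\lambda^k e^{-\lambda}$ is increasing on $(0,1)$ for every $k\ge 1$ (no exceptional small $k$), one gets $P(\mathrm{Pois}(jc(j))=k)\le P(\mathrm{Pois}(\bar\rho)=k)$ uniformly in $k$, and the result follows with no large-$k$/small-$k$ split and no intermediate $\rho'$. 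Had you kept your ``$c(j)\cdot e^{-\lambda_j}\lambda_j^k/k!$-type'' pointwise bound instead of passing to Le Cam, your argument would have collapsed to essentially the paper's, and the quantifier bookkeeping you worry about in the last paragraph would disappear.
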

 \begin{proof}[Proof of Lemma \ref{lem:binomial_bound}]
For any $k\ge 1$, and $j$ large enough so that $jc(j)<1$
\begin{align*} P(Z_j = k) &= \binom{j}{k} c(j)^k (1-c(j))^{j-k} \\ 
& = \frac{1}{k!} j (j-1)\times  \cdots \times  (j-k+1)(\frac{c(j)}{1-c(j)})^k (1-c(j))^j\\
& = \frac{1}{k!}\frac{j}{j-jc(j)}\times \frac{j-1}{j-jc(j)}
\times \cdots \times \frac{j-k+1}{j-jc(j)}(jc(j))^k (1-c(j))^j\\
& \le \frac{1}{k!}\frac{1}{1-c(j)}(jc(j))^k (1-c(j))^j.
\end{align*}
We use the fact that $jc(j)<1$ for $j$ large enough to get the last inequality. Fixing any $\bar \rho\in (\rho,1)$. Now since 
$$\ln (1-c(j))\le -c(j),$$ it follows that $(1-c(j))^j \le e^{-j c(j)}$. Since also $\frac{1}{1-c(j)} \le 1+2c(j)$ for $j$ large enough, there exists some $j_0$ such that for $k\ge 1,j \ge j_0$,
\begin{align*}
  P(Z_j = k) \le& (1+2/j)P(\mbox{Pois}(jc(j))=k)\\
  \le& (1+\epsilon)P(\mbox{Pois}(jc(j))=k),\\
\end{align*}
where $\mbox{Pois}(\lambda)$ is a Poisson random variable with parameter $\lambda$.
Moreover, for each $k\ge 1$, the function $\lambda \to \lambda^k e^{-\lambda}$ is increasing on $(0,1)$, and so
for every $\bar \rho \in (\rho,1)$, there exists some $j_1$ which may depend on $\rho$ so that for $k\geq 1$,
$$P(\mbox{Pois}(jc(j))=k)\le P(\mbox{Pois}(\bar \rho)=k).$$
Let $J=\max(j_0,j_1)$. Then, for $k\ge 1$ and $j\ge J$,
$$ P(Z_j=k)\le (1+\epsilon) P(\mbox{Pois}(\bar \rho)=k)= \mu_{\bar \rho, \epsilon}(k).$$
This proves the Lemma for $k\geq 1$. We now turn to $k=0$.
Since 
$$P(\mbox{Pois}(\bar \rho)\ge 1) = 1- e^{-\bar \rho},$$ 
it follows that 
\begin{align*}
    P(Z_j=0)=&1-\sum_{k\geq 1} P(Z_j=k)\\
    \geq& 1-(1+\epsilon)\sum_{k\geq 1} P(\mbox{Pois}(\bar \rho)=k)\\
    =&(1+\epsilon)e^{-\bar \rho}-\epsilon
\end{align*}
\end{proof}

\begin{proof}[Proof of Theorem \ref{th:transition}-2]
We will  prove the transience of ${\bf X}$ through a coupling with a random walk ${\bf Y}$ defined as follows $Y_0=0$ and $Y_{n+1}=Y_n +1- R_{n+1}$ where $(R_n:n\in\N)$ is an IID sequence of random variables with distribution $\mu_{\bar \rho, \epsilon}$. Observe that,

\begin{enumerate} 
\item ${\bf Y}$ is transient because its IID increments have expectation $1-(1+\epsilon) \bar \rho >0$. Consequently,  the probability that $j + {\bf Y}$ will ever go below any given level $L$ tends to $0$ as $j\to\infty$. 
\item Regardless of whether ${\bf X}$ is recurrent or transient,  $\limsup_{n\to\infty} X_n =\infty$  a.s. 
\item  As shown by Lemma \ref{lem:binomial_bound} if ${\bf X}$ is above $J$ its increments dominate those of ${\bf Y}$.
\end{enumerate} 
These three facts imply that for every $L>J$ and $\eta>0$ there exists some $j=j(\eta,L) >J$ such that with probability $1-\eta$ ${\bf X}$ will never drop below $L$. As $\eta $ and $L$ are arbitrary, it follows that $\liminf_{n\to\infty} X_n = \infty$ a.s.
\end{proof}
\section{Ballistic Regime, $\rho <1$}
In this section we obtain refinements to Theorem \ref{th:transition}-2. 
\subsection{Law of Large Numbers} 
\begin{thm}
\label{th:LLN}
If $\rho <1$ then $\lim_{n\to\infty} \frac{X_n} {n} =1-\rho $ in probability. If $\rho=1$ and the process {\bf X} is transient then the result holds as well.
\end{thm}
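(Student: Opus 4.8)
The plan is to establish the law of large numbers via a martingale-type decomposition of the increments, controlling the conditional mean and variance of $X_{n+1}-X_n$ once the process is large. Recall that conditioned on $X_n=k$, the increment $X_{n+1}-X_n = 1-\mathrm{Bin}(k,c(k))$ has conditional expectation $1-kc(k)$ and conditional variance $kc(k)(1-c(k))$. Since $kc(k)\to\rho$ and (in both the $\rho<1$ case and the transient $\rho=1$ case) we have $X_n\to\infty$ a.s.\ by Theorem~\ref{th:transition}-2 and the discussion preceding it, the conditional drift along the trajectory converges to $1-\rho$ and the conditional variance stays bounded (it converges to $\rho$). So heuristically $X_n$ behaves like a random walk with drift $1-\rho$, and the claim is a weak law of large numbers.

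The key steps I would carry out are as follows. First, fix $\delta>0$ and choose $K$ large enough that $|kc(k)-\rho|<\delta$ for all $k\ge K$; let $\tau_K=\inf\{n: X_n< K\}$ and recall that transience gives $P(\tau_K=\infty)\to 1$ in an appropriate sense — more precisely, for any $\eta>0$ one can start the argument from a time $N_0$ after which $X_n\ge K$ forever with probability $\ge 1-\eta$ (this uses $\liminf X_n=\infty$). Second, on the event $\{X_n\ge K\text{ for all }n\ge N_0\}$, write $X_n = X_{N_0} + \sum_{m=N_0}^{n-1}(X_{m+1}-X_m)$ and split each increment into its conditional mean plus a martingale difference: $X_{m+1}-X_m = (1-X_mc(X_m)) + \xi_{m+1}$ where $E[\xi_{m+1}\mid \mathcal F_m]=0$ and $E[\xi_{m+1}^2\mid\mathcal F_m]=X_mc(X_m)(1-c(X_m))\le C$ for some constant $C$ (bounded since $X_mc(X_m)$ is bounded). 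Third, the drift term: since each $1-X_mc(X_m)\in(1-\rho-\delta,1-\rho+\delta)$ on this event, $\frac1n\sum_{m=N_0}^{n-1}(1-X_mc(X_m))\to 1-\rho$ up to $O(\delta)$. Fourth, the fluctuation term: $M_n:=\sum_{m=N_0}^{n-1}\xi_{m+1}$ is a martingale with $\mathrm{Var}(M_n)\le C(n-N_0)$, so by Chebyshev $M_n/n\to 0$ in probability. Combining, $X_n/n\to 1-\rho$ in probability on an event of probability $\ge 1-\eta$, and letting $\eta\downarrow0$ and $\delta\downarrow0$ finishes it.

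There is a technical wrinkle: the clean martingale decomposition above lives on the random event $\{X_m\ge K\ \forall m\ge N_0\}$, which is not adapted in a way that makes $M_n$ a genuine martingale on the whole space. The standard fix is to define a stopped/modified process that agrees with ${\bf X}$ until it first drops below $K$ and thereafter evolves as an honest random walk with i.i.d.\ increments of mean close to $1-\rho$ and bounded variance (or simply truncate: set $\tilde X_{m+1}-\tilde X_m$ equal to the true increment when $\tilde X_m\ge K$ and to an independent copy of a fixed bounded mean-$(1-\rho)$ variable otherwise). Then $\tilde{\bf X}$ has a bona fide martingale decomposition globally, satisfies the LLN by the argument above, and coincides with ${\bf X}$ on the high-probability event where ${\bf X}$ never returns below $K$. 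Pushing the probability of that coincidence event to $1$ uses transience exactly as in the proof of Theorem~\ref{th:transition}-2.

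The main obstacle I anticipate is precisely this interface between the almost-sure statement ``$X_n\to\infty$'' and the quantitative control needed for a \emph{weak} law: transience only tells us $P(\exists n: X_n<K)$ can be made small by starting high, but we want a statement about $X_n/n$ for the chain started from an arbitrary (fixed) initial state. The resolution is to use that $\limsup X_n=\infty$ always and $\liminf X_n=\infty$ under transience, so after a random but a.s.\ finite time $N_0$ the chain stays above $K$; since $N_0/n\to0$ a.s., the contribution of the initial segment $[0,N_0]$ to $X_n/n$ is negligible, and on $[N_0,n]$ the controlled estimates apply. Making this rigorous — choosing the order of limits ($\eta$, then $\delta$, then $n\to\infty$) and verifying that the negligible initial segment really is negligible in probability — is the part that needs care, but it is routine once the decomposition is set up. I do not expect the $\rho=1$ transient case to require anything new beyond noting that transience is assumed as a hypothesis, so the same $\liminf X_n=\infty$ input is available.
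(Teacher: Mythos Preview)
Your martingale-decomposition approach is correct and genuinely different from the paper's. The paper instead couples each $\mbox{Bin}(X_j,c(X_j))$ with $\mbox{Pois}(X_jc(X_j))$ via Le~Cam's inequality \eqref{eq:LeCam}, then with an i.i.d.\ $\mbox{Pois}(\rho)$ sequence via Poisson additivity, so that $X_n-X_0 = n-\sum_{j<n}L_{j+1}+D_1(n)+D_2(n)$ with $L_j$ i.i.d.\ Pois$(\rho)$ and $E|D_i(n)|/n\to 0$; the conclusion then follows from the classical LLN for i.i.d.\ sums. Your route is more elementary (no Poisson approximation) and in fact a little stronger: since $\sum_m E[\xi_{m+1}^2\mid\mathcal F_m]/m^2\le C\sum_m m^{-2}<\infty$, the martingale SLLN gives $M_n/n\to 0$ a.s., so your argument actually yields $X_n/n\to 1-\rho$ a.s., not just in probability.

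That said, you have manufactured a difficulty that is not there. The decomposition $X_n=X_0+\sum_{m=0}^{n-1}(1-X_mc(X_m))+M_n$ with $M_n=\sum_{m=0}^{n-1}\xi_{m+1}$ holds \emph{globally}, and $M_n$ is a bona fide $(\mathcal F_n)$-martingale on the whole space with $E[M_n^2]\le n\,\sup_k kc(k)<\infty$; no restriction to $\{X_m\ge K\}$, no random starting index $N_0$, and no modified process are needed for the fluctuation term. For the drift term you likewise do not need the $\delta$--$K$--$N_0$ apparatus: transience gives $X_m\to\infty$ a.s., hence $X_mc(X_m)\to\rho$ a.s., and Ces\`aro averaging immediately yields $\frac1n\sum_{m=0}^{n-1}(1-X_mc(X_m))\to 1-\rho$ a.s.\ (this is exactly the argument the paper uses to control its $D_2$ term). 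So the entire ``technical wrinkle'' paragraph and the proposed fix can be deleted; the clean three-line version of your argument already works.
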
 
A sufficient condition  for {\bf X} to be transient when $\rho=1$ will be given in Section \ref{sec:critical}. The proof of the theorem is obtained by showing that the process is well-approximated by the random walk with increments $1-\mbox{Pois}(\rho)$. 
\begin{proof} 
 First, recall that a pair of random variables $Z\sim \mbox{Bin}(n,p)$ and $L\sim \mbox{Pois}(n p)$ can be constructed in such a way that

\begin{equation} 
\label{eq:LeCam}
E|Z-L| \le n p^2,
\end{equation} 
where we use the notation $X\sim Y$ to indicate that $X$ and $Y$ have the same distribution.
Next, observe that 

$$X_n - X_0 = n - \sum_{j=0}^{n-1} \mbox{Bin}_{j+1}(X_j,c(X_j)).$$

For each $j$, let $L_{j+1} \sim \mbox{Pois}(X_j c(X_j))$ coupled with $\mbox{Bin}_{j+1}(X_j,c(X_j))$ so that the respective bound from \eqref{eq:LeCam} holds. As a result, we have that 

\begin{equation} 
\label{eq:reduce1} 
X_{n} - X_0 =n -\sum_{j=0}^{n-1}\mbox{Pois}_{j+1}(X_jc(X_j))+ D_1(n), 
\end{equation} 

where 

$$D_1 (n) =  \sum_{j=0}^{n-1}\left( \mbox{Pois}_{j+1}(X_jc(X_j))-\mbox{Bin}_{j+1}(X_j,c(X_j))\right).$$

Hence, 
$$\frac{1}{n} E|D_1(n)| \le \frac{1}{n} E [\sum_{j=0}^{n-1} X_j c(X_j)^2].$$

Next, use the fact that if $L\sim \mbox{Pois}(\lambda)$ and $L'\sim \mbox{Pois}(\lambda')$, where $\lambda'\le \lambda$, they can be coupled in such a way that 
\begin{equation}
\label{eq:Pois_sum} 
L - L' = L''
\end{equation} 
where $L''$ is $\mbox{Pois}(\lambda-\lambda')$, independent of $L'$. Define an IID sequence $(L_{j})_{j\geq 1}$ with distribution $\mbox{Pois}(\rho)$. Using the last observation we can rewrite \eqref{eq:reduce1} as 

\begin{equation} 
\label{eq:reduce2} X_n - X_0 = n-\sum_{j=0}^{n-1}L_{j+1} +D_1(n)+D_2(n),
\end{equation} 

where $$D_2(n) = \sum_{j=0}^{n-1}\left(L_{j+1} - \mbox{Pois}_{j+1}(X_jc(X_j))\right).$$

From \eqref{eq:Pois_sum}, we have 
\begin{equation}
\label{eq:D2_init_bd}
\frac{1}{n} E[|D_2(n)|]\le E \frac{1}{n}\sum_{j=0}^{n-1}  [|X_j c(X_j) - \rho|].
\end{equation} 
By Theorem \ref{th:transition} the chain {\bf X} is transient when $\rho<1$. The proof below works also for $\rho=1$ provided {\bf X} is transient. In both cases, $X_n\to\infty$ a.s. It follows that $|X_j c(X_j)-\rho| \to 0$ a.s. as $j\to\infty$. Therefore, the Cesaro sums $\frac{1}{n} \sum_{j=0}^{n-1} |X_j c(X_j) - \rho|\to 0$ as $n\to\infty$ a.s. Since the sequence  $(l c(l))$ is bounded, we apply the bounded convergence theorem to show that the righthand side of \eqref{eq:D2_init_bd} tends to $0$ as $n\to\infty$, proving 
\begin{equation}
    \label{eq:D2_lim}
 \lim_{n\to\infty} E [\frac{|D_2(n)|}{n}]=0.  
\end{equation}
Since $X_j c(X_j)^2$ converges to 0 as $j$ goes to infinity we use the bounded convergence theorem again to show that,
\begin{equation}  
\label{eq:D1_lim}
\lim_{n\to\infty} E [ \frac{|D_1(n)|}{n} ] =0.
\end{equation} 
 Observe that \eqref{eq:D2_lim} and \eqref{eq:D1_lim} imply that $\frac{1}{n} D_2 (n)$ and $\frac{1}{n} D_1(n)$ converge to zero in probability. Applying the law of the large numbers to the partial sums $\sum_{j=0}^{n-1} L_{j+1}$ we obtain from \eqref{eq:reduce2}
$$ \lim_{n\to\infty} \frac{X_n}{n} = 1- E[L_1]=1-\rho,$$
in probability. The proof of Theorem \ref{th:LLN} is complete.
\end{proof}
\subsection{Eventually Bounded Drops} 
The fastest linear rate of growth for the population size is the trivial rate, one, attained when $\rho=0$ as stated in Theorem \ref{th:LLN}.  In this section we provide a finer analysis by looking at the size of drops under some regularity condition on $(c(k):k\in \N)$.
We assume that the sequence $(c(k))$ of real numbers in $(0,1)$ satisfies
\begin{align} 
\label{eq:convergence} 
& \sum_{k}  (k c(k))^{1+\gamma} < \infty \mbox{ for some }\gamma\ge 0\mbox{ and}\\
\label{eq:regularity}
&\limsup_{k\to\infty} \frac{c(k-1)}{c(k)}<\infty
\end{align} 
Note that under assumption (\ref{eq:convergence}) we have $\rho=0$ and therefore by Theorem \ref{th:LLN},  $X_n/n\to 1$ in probability. The results in this section will give a finer description of the paths of ${\bf X}$.
Recall that for $n\geq 0$,
$$X_{n+1}=X_n - Y_{n+1}+1,$$
where the conditional distribution of 
$Y_{n+1}$ given $X_n=k$ is a binomial random variable with parameters $(k , c(k))$ that we denote by $\mbox{Bin}(k , c(k))$.

\begin{thm}
\label{thm:fine} 
Assume that \eqref{eq:convergence} and \eqref{eq:regularity} hold and let  $\gamma_0$ be the smallest $\gamma \in \Z_+$  satisfying \eqref{eq:convergence}. Then almost surely,

\begin{enumerate} 
\item There exists some $n_0$ such that $Y_n \le \gamma_0$ for all $n\ge n_0$.
\item $Y_n=k$ i.o. for all $k\le \gamma_0$. 
\end{enumerate} 
\end{thm}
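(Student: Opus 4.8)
The plan is to reduce both statements to Borel--Cantelli estimates powered by a single a priori bound on the Green's function of ${\bf X}$. Since \eqref{eq:convergence} forces $\rho=0$, Theorem~\ref{th:transition} gives that ${\bf X}$ is transient, so $X_n\to\infty$ a.s.; because the increments $X_{n+1}-X_n=1-Y_{n+1}$ are at most $1$, the chain visits every integer $j\ge X_0$, hence $V_j:=\#\{n\ge 0:X_n=j\}\ge 1$ a.s.\ for all such $j$. The key lemma I will prove is
\[
M:=\sup_{j\in\N} E[V_j]<\infty .
\]
Writing $E_i[V_j]=P_i(T_j<\infty)/(1-r_j)$ with $r_j:=P_j(\text{return to }j)$, and recalling that transience already gives $E[V_j]<\infty$ for each individual $j$, it suffices to prove $\limsup_{j\to\infty}r_j<1$. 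Conditioning on the first jump out of $j$, and using that once ${\bf X}$ moves up it can return to $j$ only by hitting $j$ from above, one gets $r_j\le P(\mbox{Bin}(j,c(j))\ge 1)+P_{j+1}(\text{hit }j)$, where $P(\mbox{Bin}(j,c(j))\ge 1)=1-(1-c(j))^j\le jc(j)\to 0$.

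For the term $P_{j+1}(\text{hit }j)$ I will show it is bounded by a constant $q<1$ uniformly in large $j$ by stochastic domination. Fix $\delta>0$ small and $K$ with $jc(j)\le\delta$ for $j\ge K$. For $j\ge K$ the increment $1-\mbox{Bin}(j,c(j))$ is at most $1$ and stochastically dominates a fixed law $\xi$, supported on $\{1,0,-1,-2,\dots\}$, with strictly positive mean and with $P(\xi\le -r)\ge P(\mbox{Bin}(j,c(j))\ge r+1)$ for all $r\ge 1$ and all $j\ge K$; such $\xi$ exists because $P(\mbox{Bin}(j,c(j))\ge r+1)\le (jc(j))^{r+1}/(r+1)!\le\delta^{r+1}/(r+1)!$, while $P(\mbox{Bin}(j,c(j))\ge 1)\le\delta$ is small. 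Let ${\bf W}$ be the random walk with i.i.d.\ increments $\xi$; it is transient to $+\infty$, so $q:=P_{\ell+1}({\bf W}\text{ ever reaches }\le \ell)<1$, and $q$ is independent of $\ell$ by translation invariance. Couple ${\bf X}$ and ${\bf W}$ started at $j+1$ so that $X_n\ge W_n$ as long as $X_n\ge K$; when $j\ge K$, ${\bf X}$ stays above $K$ until it first reaches $\{\le j\}$, and at that first time $W_n\le X_n\le j$, so ${\bf W}$ has reached $\{\le j\}$ as well. Hence $P_{j+1}(\text{hit }j)\le P_{j+1}({\bf X}\text{ reaches }\{\le j\})\le q$ for $j\ge K$, giving $\limsup_j r_j\le q<1$ and completing the Green's-function bound.

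\emph{Statement 1.} By the first--moment method and the Green's-function bound,
\[
E\bigl[\#\{n\ge 0:Y_{n+1}>\gamma_0\}\bigr]=\sum_{j}E[V_j]\,P(\mbox{Bin}(j,c(j))>\gamma_0)\le M\sum_{j}\frac{(jc(j))^{\gamma_0+1}}{(\gamma_0+1)!},
\]
which is finite because $\gamma_0$ satisfies \eqref{eq:convergence}. So $\{n:Y_{n+1}>\gamma_0\}$ is a.s.\ finite, i.e.\ there is $n_0$ with $Y_n\le\gamma_0$ for all $n\ge n_0$. \emph{Statement 2.} Fix $k\le\gamma_0$ and let $(\mathcal F_n)$ be the natural filtration. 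An elementary estimate gives, for $j$ large, $P(Y_{n+1}=k\mid X_n=j)=\binom{j}{k}c(j)^k(1-c(j))^{j-k}\ge c_k(jc(j))^k$ for a constant $c_k>0$, using $\binom{j}{k}c(j)^k\ge (jc(j))^k/(2^k k!)$ and $(1-c(j))^{j-k}\to 1$ (again $\rho=0$). Summing over $n$ and using $V_j\ge 1$ for $j\ge X_0$,
\[
\sum_{n}P(Y_{n+1}=k\mid\mathcal F_n)\ \ge\ c_k\sum_{j\ge J}V_j(jc(j))^k\ \ge\ c_k\sum_{j\ge\max(J,X_0)}(jc(j))^k\ =\ \infty ,
\]
the last series diverging because $\gamma_0$ is the \emph{smallest} integer satisfying \eqref{eq:convergence}: for $k=0$ it is $\sum 1$, and for $1\le k\le\gamma_0$ the integer $k-1$ fails \eqref{eq:convergence}, so $\sum_j(jc(j))^k=\infty$ (and $(jc(j))^k\ge (jc(j))^{\gamma_0}$ for $j$ large since $jc(j)<1$). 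By the conditional Borel--Cantelli lemma, $Y_{n+1}=k$ for infinitely many $n$ a.s.; intersecting over the finitely many $k\in\{0,\dots,\gamma_0\}$ proves statement~2.

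The main obstacle is the Green's-function bound $M<\infty$, and inside it the coupling argument that $P_{j+1}(\text{hit }j)\le q<1$ uniformly in $j$: the delicate point is that the domination of ${\bf X}$ by the positive-drift walk ${\bf W}$ is valid only while ${\bf X}$ stays above the fixed threshold $K$, so one must verify that the first passage of ${\bf X}$ below $j$ genuinely occurs while the coupling is still in force, which is exactly what $j\ge K$ guarantees. Condition \eqref{eq:regularity} does not appear in this route; it is convenient if one instead tries to bound $\sum_n P(Y_{n+1}>\gamma_0)$ directly, where one needs to control the fluctuations of the sequence $(kc(k))$ along the trajectory of ${\bf X}$.
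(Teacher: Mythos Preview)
Your proof is correct, and for part~1 it follows a genuinely different route from the paper. The paper proves part~1 via a pathwise lemma (Lemma~\ref{lem:fundamental}): it shows by an inductive bootstrap on a first-step ``master formula'' that the probability of reaching $l+1$ from $l$ without any $Y$ exceeding $k$ is at least $1-H_k(lc(l))^{k+1}$, and then takes the product over $l\ge m$ to see that with probability tending to $1$ the trajectory never again sees $Y>\gamma_0$. That bootstrap is where the regularity condition \eqref{eq:regularity} enters: it is used to compare $mc(m)$ to $lc(l)$ for $m$ ranging over a window of fixed width below $l$. Your approach instead establishes a uniform Green's-function bound $\sup_j E[V_j]<\infty$ via the coupling with a positive-drift walk, and then applies the first-moment method directly; this is cleaner and, notably, makes no use of \eqref{eq:regularity}, so your argument actually proves the theorem under \eqref{eq:convergence} alone. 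For part~2 the two proofs are essentially the same: the paper exploits the strong Markov property at the first hitting times $T_l$ to produce genuinely independent events $\{Y_{T_l+1}=k\}$ and invokes the classical second Borel--Cantelli, while you package the same idea through the conditional Borel--Cantelli lemma; both rest on the fact that each $j\ge X_0$ is visited at least once because the upward increments are at most~$1$. One small stylistic point: in your return-probability estimate $r_j\le P(\mbox{Bin}(j,c(j))\ge 1)+P_{j+1}(\text{hit }j)$ it would be worth making explicit that the first step can also take the chain strictly below $j$, and that you are simply bounding the contribution of all non-upward first steps by $P(\mbox{Bin}\ge 1)$; the inequality is fine as written, but a reader might pause.
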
 
As one may expect, the theorem is obtained through Borel-Cantelli. We need the following:
\begin{lem}
\label{lem:fundamental}
Assume that $\rho=0$ and that \eqref{eq:regularity} holds. 
Let $k\in Z_+$. For $l\ge k$ let  $S_{l,k}$ be the event that starting from  $l$ the process ${\bf X}$ hits $l+1$ before any of the $Y$-s hitting a value larger than $k$. 
Then there  exists a positive constant $H_k$ such 
\begin{equation}
    P(S_{l,k})\ge 1- H_k( l c(l))^{k+1}
\end{equation}
for all $l\ge k$. 
\end{lem}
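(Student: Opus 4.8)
The plan is to estimate the probability that, starting from $l$, the process climbs to $l+1$ before any single-generation death count $Y$ exceeds $k$. The key observation is that as long as the process stays in a window around $l$ and no large drop occurs, each step is "small" and the Poisson/binomial tail controls everything. Let me set $p_l = l c(l)$; by $\rho = 0$ we have $p_l \to 0$. First I would fix the event $B_m$ that at step $m$ the death count $Y_m$ is at least $k+1$, given the current state $j$. By the binomial tail bound already used in the proof of Lemma~\ref{lem:binomial_bound} (the comparison $P(\mathrm{Bin}(j,c(j)) = i) \le \frac{1}{1-c(j)} (jc(j))^i (1-c(j))^j / i!$, which for large $j$ gives essentially $P(\mathrm{Bin}(j,c(j)) \ge k+1) \le C (jc(j))^{k+1}$), I get that from a state $j$ the chance of an immediate drop of size $\ge k+1$ is at most $C_0 (j c(j))^{k+1}$.

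Next I would run the process until it either reaches $l+1$ or suffers a drop of size $\ge k+1$. While neither has happened, the process only ever takes steps in $\{-k, -k+1, \dots, 1\}$, and since it hasn't yet hit $l+1$, it stays in $\{l-k\cdot(\text{something}), \dots, l\}$ — more carefully, before the first visit to $l+1$ the chain is confined to $\{1, 2, \dots, l\}$, and in fact once we also forbid drops $\ge k+1$ it is confined to an interval whose left endpoint I can take to be of the form $l - N$ for a geometric-type number of steps $N$ of excursions below $l$. The cleanest route: let $T$ be the first time the walk hits $l+1$ and $U$ the first time a $Y$-value exceeds $k$; I want $P(U < T)$ small. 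Before $\min(T,U)$ the walk lives in $\{1,\dots,l\}$ and takes $\pm$-bounded steps, with upward drift (each increment has mean $1 - jc(j) \ge 1 - p_{l}\cdot\max_{j\le l} (c(j)/c(l))\cdot(\cdots)$, which is positive for $l$ large by \eqref{eq:regularity}); so $T$ is a.s. finite and, by optional stopping / a standard hitting-time estimate for a walk with positive drift and bounded steps, $E[T \mid \text{no big drop}]$ is bounded by a constant depending only on $k$ (here is exactly where \eqref{eq:regularity} enters: it guarantees that $c(j)$ for $j$ ranging over the window $[l-k,l]$, and more generally over the range visited, is comparable to $c(l)$, so all the death probabilities in play are $\Theta(p_l / l)$ and the drift stays uniformly positive and the per-step big-drop probability stays $\le C_1 (l c(l))^{k+1}$ with a constant uniform in the window).

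Then a union bound finishes it:
\[
P(S_{l,k}^c) = P(U < T) \le E\!\left[\sum_{m=0}^{T-1} P\big(Y_{m+1} \ge k+1 \mid X_m\big)\right] \le C_1 (l c(l))^{k+1}\, E[T] \le H_k (l c(l))^{k+1},
\]
where the bound on $P(Y_{m+1}\ge k+1\mid X_m)$ holds for all states $X_m$ that can be visited before $T$ (all $\le l$, and comparable in $c$-value to $l$ by regularity), and $E[T] \le$ const$(k)$ as argued. One has to be slightly careful that the bound $C_1(lc(l))^{k+1}$ on the per-step big-drop probability really is uniform over the visited states; the point is that a visited state $j$ satisfies $j \le l$ hence $jc(j) \le l c(l)\cdot \frac{j}{l}\cdot\frac{c(j)}{c(l)}$, and for $j$ in the bounded-below window the ratio $c(j)/c(l)$ is bounded by \eqref{eq:regularity} while $j/l \le 1$, so $jc(j) = O(lc(l))$ with a universal constant — and for $j$ much smaller than $l$ we are even better off since $jc(j)\to 0$. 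The main obstacle, and the step deserving the most care, is controlling $E[T]$: one must confirm that the confinement to $\{1,\dots,l\}$ together with the uniformly positive drift (valid for all large $l$ by regularity) yields a bound on the expected hitting time of $l+1$ that does not blow up with $l$ — this is a Lyapunov/martingale argument (e.g. comparing to the simple random walk with drift, or using $X_m$ itself as a submartingale and Wald), and it is the linchpin that converts the per-step estimate $O((lc(l))^{k+1})$ into the stated bound.
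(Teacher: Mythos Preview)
Your union-bound approach
\[
P(U<T)\;\le\;E\Big[\sum_{m=0}^{T-1} P(Y_{m+1}\ge k+1\mid X_m)\Big]\;\le\;C_1\,(lc(l))^{k+1}\,E[T]
\]
has a genuine gap at the second inequality. It requires the per-step bound $P(Y_{m+1}\ge k+1\mid X_m=j)\le C_1(lc(l))^{k+1}$ uniformly over \emph{all} states $j$ that can be visited before $T$, i.e.\ all $j\in\{1,\dots,l\}$. That is false: for any fixed $j_0>k$ the left side is a fixed positive number (namely $P(\mbox{Bin}(j_0,c(j_0))\ge k+1)>0$), while $(lc(l))^{k+1}\to 0$ as $l\to\infty$. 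Your justification that ``for $j$ much smaller than $l$ we are even better off since $jc(j)\to 0$'' conflates two different limits: $jc(j)\to 0$ refers to $j\to\infty$, not to $j$ fixed while $l\to\infty$. The regularity hypothesis \eqref{eq:regularity} only compares $c$ at \emph{adjacent} sites, so by iteration it yields $jc(j)=O(lc(l))$ for $j$ within a bounded distance of $l$, not for all $j\le l$. And since $T$ is the hitting time of the genuine process (large drops allowed), the states visited before $T$ are in no way confined to such a window.

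The bound $E[T]\le\mathrm{const}(k)$ is also left as a sketch; even if it can be established, it does not repair the failure of the per-step bound. To rescue your strategy you would first need to show that the process stays in a window $[l-M,l]$ until $T$ except on an event of probability $O((lc(l))^{k+1})$---but that is essentially the estimate the lemma asserts. The paper avoids this circularity by a first-step decomposition: conditioning on the first death count $Z_l\in\{0,1,\dots,k\}$ expresses $P(S_l)$ recursively in terms of $P(S_{l}),P(S_{l-1}),\dots,P(S_{l-k+1})$, and a bootstrapping induction then upgrades the trivial bound $P(S_m)\ge 1-O(lc(l))$ successively to $1-O((lc(l))^2),\dots,1-H_k(lc(l))^{k+1}$. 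Because this recursion only ever involves states within a distance depending solely on $k$ from $l$, the regularity condition applies exactly where needed.
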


We will prove this lemma in Section \ref{rec:proof}.

\begin{proof}[Proof of Theorem \ref{thm:fine}]
 
 We first prove part 1 of Theorem \ref{thm:fine}.
For any fixed $k$ and $m\ge k$, let $A_m$ be the event that from the first time ${\bf X}$ hits  $m$ the process ${\bf X}$ hits $m+1$ before any $Y$ is larger than $k$, then hits $m+2$ before any $Y$ is larger than $k$, etc... By the Markov property, 
$$P(A_m)=\prod_{l\geq m}P(S_{l,k}).$$
Then by the Lemma \ref{lem:fundamental}
$$P(A_m)\geq \prod_{l\ge m} (1- H_k (lc(l))^{k+1}).$$
Now take  $k= \gamma_0$.
By \eqref{eq:convergence} and the definition of $\gamma_0$ the product on the RHS converges which in turn implies that $\lim_{m\to\infty} P(A_m)=1$. Since the sequence of events $(A_m)$ is increasing, it follows that $$P(\bigcup_{m} A_m)=\lim_{m\to\infty} P(A_m)=1.$$
As a result, a.s. the values of the $Y$-s are eventually $\le k=\gamma_0$. This proves statement 1 of Theorem \ref{thm:fine}. 

Now we turn to the proof of statement 2 of Theorem \ref{thm:fine}. Let $T_l$ be the first time that the chain {\bf X} visits $l$. For $k\le \gamma_0$, let  $B_{l,k}$ be the event that $Y_{T_l+1}=k$. Note that by the strong Markov property $B_{l,k}$ is independent of $B_{l',k}$ for $l'< l$. Moreover, we have the lower bound
\begin{align*}
P(B_{l,k})&= \binom{l}{k} c(l)^k (1-c(l))^{l-k}\\
&\ge \frac{(l-k)^k}{k!}   c(l))^k (1-c(l))^{l}\\
&= \frac{(1-c(l))^{l}}{k!}\frac{(l-k)^k}{l^k} (l c(l))^k\\
&\ge C_k (l c(l))^k
\end{align*}
Observe that since $k \le \gamma_0$, $\sum_l P(B_{l,k})=
\infty$, and so because of independence of the events $B_{l,k}$, it follows from the second Borel-Cantelli Lemma that $P(B_{l,k} \mbox{ i.o.})=1$. In particular, a.s.  $Y$ attains the value $k$ i.o. This completes the proof of Theorem \ref{thm:fine}.
 \end{proof}
\section{Critical Regime, $\rho =1$} 
\label{sec:critical}
In this section we assume: 
$$c(k) = \frac{1+\eta(k)}{k},$$ where $\eta (k)\to 0$ as $k\to\infty$. 
\begin{thm}
\label{thm:recurr_crit}
\begin{enumerate}
    \item If $\eta$ is eventually nonnegative, then ${\bf X}$ is recurrent. 
    \item Suppose that $\eta(x) \le -\frac{1}{1+x}$ eventually then ${\bf X}$ is transient. 
\end{enumerate} 
\end{thm}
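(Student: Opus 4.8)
The strategy is to analyze the embedded one-step drift and apply classical recurrence/transience criteria for Markov chains on $\N$ that are based on test functions (the Foster--Lyapunov type criteria; see e.g. Bremaud or Menshikov--Popov--Wade). Throughout, write $d(k) := kc(k) = 1+\eta(k)$, so $E_k[X_1 - X_0] = 1 - d(k) = -\eta(k)$, which is $\le 0$ in case 1 and positive but tiny (of order $1/k$) in case 2. Since the mean drift is exactly marginal, the decision is governed by the second-order behavior, so I would compute $E_k[(X_1-X_0)]$ and $E_k[(X_1-X_0)^2]$ carefully: with $Y_{n+1}\sim\mathrm{Bin}(k,c(k))$ we have $X_1 - X_0 = 1 - Y_{n+1}$, so $E_k[X_1-X_0] = 1 - kc(k) = -\eta(k)$ and $\mathrm{Var}_k(X_1-X_0) = kc(k)(1-c(k)) = (1+\eta(k))(1-c(k)) \to 1$. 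Thus $E_k[(X_1-X_0)^2] \to 1$.

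\emph{Part 1 (recurrence when $\eta\ge 0$ eventually).} Here the drift is $\le 0$ beyond some $k_1$, so $h(x)=x$ is itself a supermartingale-type test function on $\{x > k_1\}$: $E_k[h(X_1)] = h(k) - \eta(k) \le h(k)$ for $k > k_1$. Combined with the fact that the increments have bounded-in-expectation size (indeed $E_k|X_1-X_0|$ is bounded, since $E|1-Y| \le 1 + kc(k)$ is bounded), the standard criterion for recurrence via a nonnegative test function that is eventually superharmonic outside a finite set and unbounded (e.g. \cite{Bremaud}, the recurrence counterpart of Foster's theorem, sometimes attributed to Foster/Tweedie) yields that ${\bf X}$ is recurrent. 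I would just verify the hypotheses of whichever version is cited: $h\ge 0$, $h(i)\to\infty$, $E_i[h(X_1)] \le h(i)$ for all $i$ outside a finite set $F$, and $\sup_{i\in F} E_i[h(X_1)] < \infty$. The last point needs only that each one-step expectation is finite, which is clear.

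\emph{Part 2 (transience when $\eta(x)\le -\tfrac{1}{1+x}$ eventually).} Now the drift is $-\eta(k) \ge \tfrac{1}{1+k}$, positive but summable, so $h(x)=x$ will not detect transience; I need a concave test function. The natural choice is $h(x) = \log x$ (or $\log(1+x)$ to stay finite at small $x$), since for a walk with unit-order variance and drift $\sim \beta/k$ with $\beta = 1$, the second-order Taylor expansion gives
\begin{align*}
E_k[\log X_1 - \log k] &\approx \frac{E_k[X_1-k]}{k} - \frac{E_k[(X_1-k)^2]}{2k^2} + \text{lower order}\\
&\approx \frac{1}{k^2} - \frac{1}{2k^2} = \frac{1}{2k^2} > 0,
\end{align*}
so $\log$ is eventually \emph{sub}harmonic, which by the Lyapunov criterion for transience (a bounded-below test function that is subharmonic outside a finite set and tends to $\infty$ along the state space but is such that the chain can escape — more precisely, one uses the criterion: if there is $h\ge 0$ with $h(i_0) < \inf_{i\in F} h(i)$ for a finite set $F$ and $E_i[h(X_1)] \ge h(i)$ for $i\notin F$, plus a non-triviality condition, then the chain is transient) gives transience. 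Since $h=\log$ is unbounded, I would instead use the bounded version: take $g(x) = 1 - 1/\log x$ or simply note that the classical transience criterion (\cite{Bremaud}, or Menshikov--Popov--Wade Theorem 2.5.8) only requires $h$ bounded, superharmonic-for-escape outside $F$; so I would pick $h(x) = -1/\log(x+2)$, bounded in $(-1,0)$, and show $E_k[h(X_1)] \ge h(k)$ for large $k$, which follows from the same expansion since $h$ is an increasing concave-ish function of $\log x$.

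\textbf{Main obstacle.} The delicate point is controlling the Taylor remainder in the expansion of $E_k[h(X_1)]$ uniformly in $k$, because $X_1$ can in principle be as small as $1$ (if $Y_{n+1}=k$), where $\log$ misbehaves. I would handle this by splitting the expectation over the event $\{Y_{n+1} \le k/2\}$, where a second-order Taylor estimate with remainder controlled by $\sup_{\xi \ge k/2}|h''(\xi)| = O(1/k^2)$ applies and captures the main $+\tfrac{1}{2k^2}$ term, and its complement $\{Y_{n+1} > k/2\}$, whose probability is super-polynomially small in $k$ by a binomial/Poisson large-deviation bound (here $\mathrm{Bin}(k, (1+\eta(k))/k)$ has mean $\approx 1$, so $P(Y > k/2)$ decays faster than any power of $1/k$), hence contributes a negligible $o(1/k^2)$ even after accounting for the $O(\log k)$ size of $h$ there. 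The same truncation makes the $E_k[(X_1-k)^2]$ computation rigorous. Assembling these shows $E_k[h(X_1)] - h(k) \ge \tfrac{c}{k^2} + o(1/k^2) > 0$ for all large $k$, completing the transience argument; for recurrence the corresponding estimate is immediate since no truncation is needed when $h(x)=x$.
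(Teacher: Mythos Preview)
Your approach is sound in spirit but differs substantially from the paper's, and Part~2 as written has a constant that does not close.

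For Part~1 the paper does \emph{not} use $h(x)=x$; instead it takes $u_N(k)=\frac{1}{N+k}$, shows by Jensen's inequality that $u_N(X_t)$ is a \emph{sub}martingale when $\eta\ge 0$, applies optional stopping to get $P_x(\tau_1<\infty)\ge\frac{N+1}{N+x}$, and lets $N\to\infty$. Your route via $h(x)=x$ and the Foster-type recurrence criterion is correct and arguably more direct. For Part~2 the paper again works with $u(x)=1/x$ and avoids any Taylor expansion: using the identity $E\bigl[\tfrac{1}{1+Z}\bigr]=\int_0^1 E[\lambda^Z]\,d\lambda$ for $Z\sim\mbox{Bin}(x,1-c(x))$, it computes $E_x[u(X_1)]=\frac{1-c(x)^{x+1}}{(x+1)(1-c(x))}$ exactly, and checks that $E_x[u(X_1)]\le 1/x$ reduces to $(x+1)c(x)\le 1+c(x)^{x+1}$, which holds whenever $\eta(x)\le -\tfrac{1}{1+x}$. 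This gives a clean supermartingale with no remainder to control, and optional stopping yields $P_x(\tau_1<\infty)\le 1/x$.

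The gap in your Part~2 sketch is the remainder bound. With $h=\log$ and truncation at $k/2$, the Lagrange bound gives $\sup_{\xi\ge k/2}|h''(\xi)|=4/k^2$, so the second-order term is bounded in absolute value only by $\tfrac12\cdot\tfrac{4}{k^2}\cdot E_k[(X_1-k)^2]\approx \tfrac{2}{k^2}$, which swamps the first-order gain $h'(k)(-\eta(k))\approx \tfrac{1}{k^2}$; the ``$+\tfrac{1}{2k^2}$'' you want does not survive this crude bound. The fix is either to expand to third order (then $\sup_{\xi\ge k/2}|h'''(\xi)|=O(k^{-3})$ and $E_k|X_1-k|^3=O(1)$ make the remainder $O(k^{-3})$, leaving the genuine second-order term $\tfrac12 h''(k)E_k[(X_1-k)^2]\approx -\tfrac{1}{2k^2}$) or to truncate much closer to $k$ (say at $k-k^{1/2}$). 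Either repair works, but note that the boundary case $\eta(k)=-\tfrac{1}{1+k}$ is exactly the Lamperti threshold $2k\mu(k)=\sigma^2(k)+o(1)$, so the constants are tight and you cannot afford any slack in the second-order coefficient. The paper's exact computation sidesteps all of this.
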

We first prove Theorem \ref{thm:recurr_crit}-1. 
\begin{lem}
\label{lem:sub_MG} 
For $N\ge 0$ let  $u_N(k)= \frac{1}{N+k}$. Suppose $\eta \ge 0$.  Then $u_N(X_t)$ is a sub-martingale. 
\end{lem}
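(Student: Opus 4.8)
The plan is to show that $u_N(X_t) = \frac{1}{N+X_t}$ is a sub-martingale by computing $E[u_N(X_{t+1}) \mid X_t = k]$ and showing it is at least $u_N(k)$ for a suitable choice of $N$. Writing $X_{t+1} = k + 1 - Z$ where $Z \sim \mathrm{Bin}(k, c(k))$, what is needed is the inequality
\[
E\!\left[\frac{1}{N + k + 1 - Z}\right] \ge \frac{1}{N+k}.
\]
The natural tool is convexity: the map $z \mapsto \frac{1}{N+k+1-z}$ is convex on the relevant range $z \in \{0, 1, \dots, k\}$ (so long as $N + k + 1 - z > 0$, i.e. $N \ge 0$ suffices since $z \le k$ gives denominator $\ge N+1 > 0$), so by Jensen's inequality the left-hand side is at least $\frac{1}{N + k + 1 - E[Z]} = \frac{1}{N + k + 1 - kc(k)}$. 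Hence it suffices to check that $\frac{1}{N+k+1-kc(k)} \ge \frac{1}{N+k}$, equivalently $k c(k) \ge 1$, which holds precisely because $c(k) = \frac{1+\eta(k)}{k}$ with $\eta \ge 0$ gives $kc(k) = 1 + \eta(k) \ge 1$. (If $\eta \ge 0$ only holds eventually, the same argument shows $u_N(X_t)$ is a sub-martingale from the time onward that $\eta(X_t) \ge 0$, which is enough for the recurrence argument; alternatively one notes $\eta(k) \ge 0$ for all $k \ge k_0$ and works on $\{X_t \ge k_0\}$.)

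The steps, in order: first, fix $N \ge 0$ and record that for $k \ge 1$ and $z \in \{0,\dots,k\}$ one has $N+k+1-z \ge N+1 \ge 1 > 0$, so $u_N$ is evaluated only at positive integers and the function $g(z) = 1/(N+k+1-z)$ is well-defined and convex there. Second, apply the conditional Jensen inequality to $g$ and the conditional law of $Z$ given $X_t = k$ to get $E[u_N(X_{t+1}) \mid X_t = k] \ge g(E[Z]) = \frac{1}{N+k+1-kc(k)}$. Third, substitute $kc(k) = 1 + \eta(k)$ and use $\eta(k) \ge 0$ to conclude $N + k + 1 - kc(k) = N + k - \eta(k) \le N + k$, hence $E[u_N(X_{t+1}) \mid X_t = k] \ge \frac{1}{N+k} = u_N(k)$. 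Finally, note integrability is trivial since $0 < u_N(X_t) \le 1$, so the sub-martingale property is established.

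I do not anticipate a serious obstacle here — the only mild subtlety is making sure the convexity/Jensen step is applied on a range where the denominator stays positive, which is handled by $z \le k$, and being careful about the "eventually" hypothesis on $\eta$ (addressed by restricting attention to large enough states or large enough times, as above). The content of the lemma is essentially that the drift of $X_t$ is nonpositive in the critical-or-supercritical direction, packaged through the harmonic-type function $u_N$; the reason for carrying the free parameter $N$ is presumably so that the subsequent recurrence proof can take $N \to \infty$ or tune $N$ to a starting level, but that is beyond what this lemma asserts.
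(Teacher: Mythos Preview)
Your proof is correct and is essentially the same as the paper's: both apply Jensen's inequality to the convex function $x\mapsto 1/(N+x)$ (equivalently to $z\mapsto 1/(N+k+1-z)$) to reduce the sub-martingale inequality to $kc(k)\ge 1$, which follows from $\eta\ge 0$. Your additional remarks on positivity of the denominator, integrability, and the ``eventually'' case are fine and do not depart from the paper's argument.
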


\begin{proof}[Proof of Lemma \ref{lem:sub_MG}]
To show the conclusion it is enough to show for all $x$ that 
$$E_x [u_N(X_1)] \ge u_N(x).$$ 

Now, 
$$E_x [u_N(X_1)] \ge u_N(E_x [X_1]),$$ by Jensen's inequality, as $u_N$ is convex. However, 
$$E_x [ X_1] = E[1+ \mbox{Bin}(x,1-c(x))]=1+x (1-c(x)).$$ Therefore, 
$$E_x [u_N(X_1)] \ge \frac{1}{N+1+x (1-c(x))}.$$
Observe that,
$\frac{1}{N+1+x (1-c(x))}\ge \frac{1}{N+x}$  if and only if $x c(x) \ge 1$, proving the result. 
\end{proof}
\begin{proof}[Proof of Theorem \ref{thm:recurr_crit}-1]
Without loss of generality we may assume that $\eta(x) \ge 0$ for all $x$. 

Let $\tau_x = \inf\{t:X_t =x\}$. 
Since $u_N(X_t)$ is a submartingale we can apply the optional stopping theorem to obtain for every $M>1$, 
$$u_N(x) \le E_x [ u_N(X_{\tau_1 \wedge \tau_M})]=P_x (\tau_1 < \tau_M) u_N(1) + (1-P_x (\tau_1<\tau_M)) u_N(M).$$
This is equivalent to 
$$P_x (\tau_1 < \tau_M) \ge \frac{u_N(x) - u_N(M)}{u_N(1)-u_N(M)}.$$
By taking $M\to \infty$ on both sides, we have 

$$ P_x(\tau_1 < \infty) \ge \frac{u_N(x)}{u_N(1)}=\frac{N+1}{N+x}.$$

The result follows by  taking $N\to\infty$.
\end{proof} 

\begin{proof}[Proof of Theorem \ref{thm:recurr_crit}-2]
Without loss of generality, we may assume that 
$$\eta(x) \le -\frac{1}{1+x}$$
 for all $x\in\N$.  Let  $u(x) =\frac{1}{x}$. We show that under the given assumptions,  $u(X_t)$ is a supermartingale.  Indeed, 
$$E_x [ u(X_1)] = E [\frac{1}{1+\mbox{Bin}(x,1-c)}].$$ 
Note that for any finite variable $Z$ taking values in $\Z_+$, 
$$E [\frac{1}{1+Z} ] = \int_0^1 E [\lambda^Z] d\lambda.$$ 
In the case of $Z\sim \mbox{Bin}(x,1-c(x))$, we get
$$E_x [ u(X_1)] = \frac{1-c(x)^{x+1}}{(x+1)(1-c(x))}.$$

Thus $E_x [ u(X_1)]\le u(x)= \frac{1}{x}$ if and only if 
$$(x+1)(1-c(x)) \ge x(1-c(x)^{x+1}).$$ After simplifying the expression, this inequality is equivalent to 
$$(x+1)c(x) \le 1+c(x)^{x+1}.$$ 
This inequality will hold whenever  $c(x) (x+1) \le 1$ which is equivalent to 
$$\eta (x)\le -\frac{1}{1+x}.$$  Hence, under this condition on $\eta$,
$u(X_t)$ is a supermartingale.

Then, an analysis analogous to the one in the proof of Theorem \ref{thm:recurr_crit}-1 with the appropriate changes gives 
$$ P_x(\tau_1 < \infty) \le \frac{u(x)}{u(1)}=\frac{1}{x}.$$
In particular, ${\bf X}$ is transient.
\end{proof}
 \section{A particular case}
 \label{sec:example}

In this section, we set for any natural number $k$,
$$c(k)=\frac{1}{k^a+1},$$ 
where $a>0$ is a fixed parameter. Note that with our definition of $c(1)$, {\bf X} is an irreducible Markov chain on the natural numbers.

We will show that the process ${\bf X}$ is positive recurrent for $a<1$ and transient for $a\geq 1$. Hence, for no value of $a$ is {\bf X} null recurrent.

 \begin{figure}[h!]
\includegraphics [width=10cm]{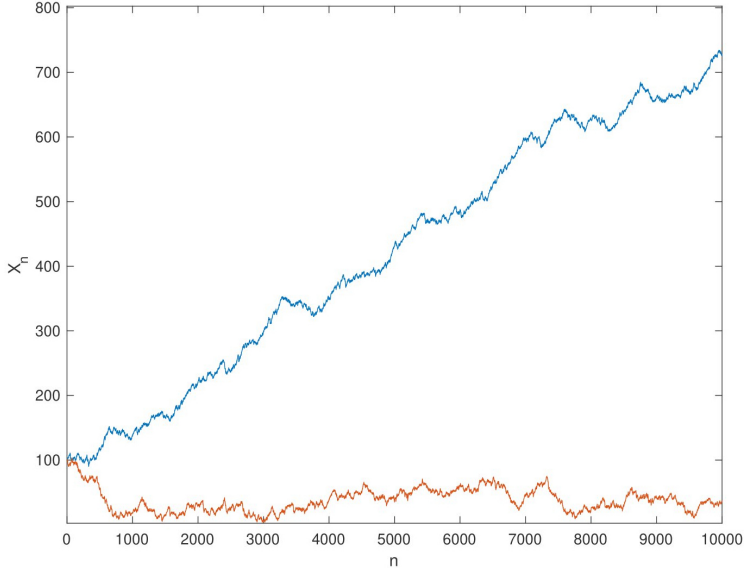}
 \label{fig:phase}
\caption {Simulations for $c(k)=1/(1+k^a)$ for $a=0.99$ (positive recurrent, red) and $a=1.01$ (transient, blue). In both simulations,  $X_0=100$.}
\end{figure}

\subsection{Positive recurrence}

$\bullet$ If $a<1$ then {\bf X} is a positive recurrent Markov chain.

\medskip

Note that $\lim_kkc(k)=+\infty$ when $a<1$. Hence, by Theorem \ref{th:transition} the process {\bf X} is positive recurrent.

\subsection{Transience}

$\bullet$ If $a>1$ then the process ${\bf X}$ is transient.

\medskip

Note that $\lim_kkc(k)=0$ when $a>1$. Hence, by Theorem \ref{th:transition} the process {\bf X} is transient.

\medskip

For the next result we apply Theorem \ref{thm:fine}.

\medskip

$\bullet$ If $a>2$ then a.s. there exists a time $n_0$ such that if $n\geq n_0$ then 
$$X_{n+1}=X_n+1.$$

\medskip

This is so because if $a>2$ then the corresponding $\gamma_0$ in Theorem \ref{thm:fine} is 0.  The result above follows.

\medskip

$\bullet$ If $1<a\leq 2$ there exists a unique natural number $k_a$ such that
$$\frac{k_a+2}{k_a+1}<a\leq \frac{k_a+1}{k_a}.$$
The corresponding $\gamma_0$ in Theorem \ref{thm:fine} is then $k_a$.

\medskip

We now prove this claim. The series
$$\sum_{k\geq 1}(kc(k))^{1+\gamma}$$
converges if and only if
$$\gamma>-1+\frac{1}{a-1}.$$
On the other hand,
$$k_a-1\leq -1+\frac{1}{a-1}<k_a.$$
Hence, the smallest integer $\gamma$ for which the series above converges is $k_a$. This proves the claim.

\subsection{Critical case}

$\bullet$ If a=1 the process {\bf X} is transient.

\medskip

If $a=1$ then
$$kc_k=\frac{k}{1+k}=1-\frac{1}{1+k}.$$
In the notation of Theorem \ref{thm:recurr_crit}, $\eta(k)=-\frac{1}{1+k}$. Hence, this theorem applies and {\bf X} is transient.
 
 \section{Proof of Lemma \ref{lem:fundamental}}
 \label{rec:proof}
 
 \subsection{Preliminaries}
Let $l\in \Z_+$. In what follows we write $Z_l$ for a $\mbox{Bin}(l,c(l))$-distributed random variable. 
\begin{lem}
\label{lem:bound}
Let $l\in Z_+$. 
Then for every fixed $k\leq l$
$$ P( Z_l \ge k) - P(Z_l=k) \le lc(l)f(l) P(Z_l=k),$$
where $f(l)=e^{l c(l)} (1-c(l))^{-l}$.
\end{lem}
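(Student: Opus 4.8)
The plan is to bound the tail $P(Z_l \ge k)$ by comparing term-by-term with the single term $P(Z_l = k)$, summing a geometric-type series, and then recognizing the sum as a truncated exponential series which we crudely bound by $f(l)$. Concretely, for $k \le m \le l$ write
\begin{align*}
\frac{P(Z_l = m)}{P(Z_l = k)} &= \frac{\binom{l}{m}}{\binom{l}{k}}\left(\frac{c(l)}{1-c(l)}\right)^{m-k}.
\end{align*}
The ratio of binomial coefficients is $\frac{(l-k)(l-k-1)\cdots(l-m+1)}{(k+1)(k+2)\cdots m} \le \frac{l^{m-k}}{(m-k)!}$, since each of the $m-k$ factors in the numerator is at most $l$ and the denominator is $m!/k! \ge (m-k)!$. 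Hence
\begin{align*}
P(Z_l = m) &\le P(Z_l = k)\,\frac{1}{(m-k)!}\left(\frac{l\,c(l)}{1-c(l)}\right)^{m-k}.
\end{align*}

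Next I would sum over $m$ from $k+1$ to $l$:
\begin{align*}
P(Z_l \ge k) - P(Z_l = k) = \sum_{m=k+1}^{l} P(Z_l = m) \le P(Z_l=k)\sum_{j\ge 1}\frac{1}{j!}\left(\frac{l\,c(l)}{1-c(l)}\right)^{j} = P(Z_l=k)\left(e^{l c(l)/(1-c(l))} - 1\right).
\end{align*}
At this point the bound has the shape "$P(Z_l=k)$ times something," but the ``something'' is $e^{lc(l)/(1-c(l))}-1$ rather than the claimed $lc(l)f(l) = lc(l)e^{lc(l)}(1-c(l))^{-l}$. To reconcile these I would use $\frac{lc(l)}{1-c(l)} \le lc(l)\cdot(1-c(l))^{-l}$ inside the exponent is not quite the move; instead note $e^x - 1 \le x e^x$ for $x \ge 0$, apply it with $x = \frac{lc(l)}{1-c(l)}$, giving $e^x-1 \le \frac{lc(l)}{1-c(l)} e^{lc(l)/(1-c(l))}$, and then bound $\frac{1}{1-c(l)} \le (1-c(l))^{-l}$ and $e^{lc(l)/(1-c(l))} \le e^{lc(l)(1-c(l))^{-l}}$... which overshoots $f(l)$.

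The cleaner route, and the one I expect actually works: factor $(1-c(l))^{-(m-k)}$ differently. Since $m \le l$, we have $(1-c(l))^{-(m-k)} \le (1-c(l))^{-l}$ for every term with $m > k$, uniformly. So pull that factor out first:
\begin{align*}
P(Z_l \ge k) - P(Z_l=k) &\le P(Z_l=k)\,(1-c(l))^{-l}\sum_{j\ge 1}\frac{(l c(l))^j}{j!} = P(Z_l=k)\,(1-c(l))^{-l}\left(e^{lc(l)} - 1\right),
\end{align*}
and then $e^{lc(l)} - 1 \le lc(l)\,e^{lc(l)}$ yields exactly $P(Z_l \ge k) - P(Z_l=k) \le lc(l)\,e^{lc(l)}(1-c(l))^{-l}P(Z_l=k) = lc(l)f(l)P(Z_l=k)$, as claimed. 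The main obstacle is purely bookkeeping: getting the factor $(1-c(l))^{-l}$ pulled out at the right moment (before summing, using $m \le l$) rather than after, and remembering the elementary inequality $e^x - 1 \le x e^x$ for $x \ge 0$; there is no probabilistic subtlety here.
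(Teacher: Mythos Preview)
Your final argument (the ``cleaner route'') is correct and is essentially the same idea as the paper's: bound the ratio of successive binomial terms by $l^{m-k}/(m-k)!$ times a power of $c(l)/(1-c(l))$, then control the resulting exponential-type sum. The paper organizes the computation a little differently: it drops the factor $(1-c(l))^{l-j}\le 1$ at the outset, bounds the tail $\sum_{j\ge k+1}(lc(l))^j/j!$ via the Lagrange form of the Taylor remainder for $e^x$, and only at the end rewrites the result as a multiple of $P(Z_l=k)$; you instead keep $P(Z_l=k)$ factored out from the start, pull out $(1-c(l))^{-l}$ uniformly using $m-k\le l$, and replace the Lagrange remainder by the more elementary $e^x-1\le xe^x$. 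Both routes arrive at exactly the same bound; yours is arguably tidier.
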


\begin{proof} 
For this proof we need the following bound for integers  $0\leq k<j\leq l$:
\begin{align}
\nonumber
 \binom{l}{j}=&\frac{1}{j!}l(l-1)\dots(l-k)\dots (l-j+1)\\
 \label{eq:binom_upper}
 \leq &\frac{1}{j!}\frac{l!}{(l-k-1)!}l^{j-k-1}.
\end{align}
Next, 
\begin{align*} P (Z_l \ge k) -P(Z_l=k) &= \sum_{j=k+1}^l \binom{l}{j} c(l)^{j}  (1-c(l))^{l-j}\\ 
& \overset{\eqref{eq:binom_upper}}{\le}
\frac{l!}{(l-k-1)!}\sum_{j=k+1}^l \frac{l^{j-k-1}}{j!} c(l)^j\\
&\le \frac{l!}{(l-k-1)!}l^{-k-1}\sum_{j=k+1}^\infty \frac{l^{j}}{j!} c(l)^j\\
& \le e^{lc(l)}\frac{l!}{(l-k-1)!}   \frac{(c(l))^{k+1}}{(k+1)!}\\
\end{align*}
where the last line was obtained from the Lagrange remainder term for the Taylor series for $x\to e^x$. Hence,
\begin{align*}
P (Z_l \ge k) -P(Z_l=k) & \le e^{l c(l)} \binom{l}{k+1}c(l)^{k+1}\\
&= e^{l c(l)} c(l) \frac{l-k}{k+1}\binom{l}{k} c(l)^k (1-c(l))^{l-k} (1-c(l))^{k-l}\\
& = e^{l c(l)}c(l) \frac{l-k}{k+1} (1-c(l))^{k-l} P(Z_l=k)\\
& \le  e^{l c(l)} c(l) l  (1-c(l))^{-l} P(Z_l=k).
\end{align*}

\end{proof} 
The following corollary is an immediate consequence of Lemma \ref{lem:bound}.
\begin{cor}
\label{cor:binomial}
For  $k$ such that $0\leq k\leq l$
$$P(Z_l \ge k)\le \frac{( lc(l)) ^k}{k!} \left(1+lc(l)f(l)\right),$$
where $f(l)=e^{l c(l)} (1-c(l))^{-l}$.
\end{cor}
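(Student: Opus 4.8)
The plan is to combine Lemma \ref{lem:bound} with a crude pointwise bound on the binomial probability $P(Z_l = k)$, so that the corollary comes out as essentially a one-line deduction.

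First I would rewrite the conclusion of Lemma \ref{lem:bound} in the equivalent form
$$P(Z_l \ge k) = \bigl(P(Z_l \ge k) - P(Z_l = k)\bigr) + P(Z_l = k) \le \bigl(1 + lc(l)f(l)\bigr) P(Z_l = k),$$
valid for all $0 \le k \le l$, with $f(l) = e^{lc(l)}(1-c(l))^{-l}$ as in the lemma.

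Next I would bound the single-point probability $P(Z_l = k) = \binom{l}{k} c(l)^k (1-c(l))^{l-k}$ from above by $(lc(l))^k/k!$. This uses only the two elementary facts that $\binom{l}{k} = \frac{l(l-1)\cdots(l-k+1)}{k!} \le \frac{l^k}{k!}$, and that $(1-c(l))^{l-k} \le 1$ since $0 < c(l) < 1$; here the hypothesis $k \le l$ is precisely what makes the exponent $l-k$ nonnegative and hence the latter estimate legitimate. Thus
$$P(Z_l = k) \le \frac{l^k}{k!}\,c(l)^k = \frac{(lc(l))^k}{k!}.$$

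Substituting this into the first display yields the claimed inequality
$$P(Z_l \ge k) \le \frac{(lc(l))^k}{k!}\bigl(1 + lc(l)f(l)\bigr).$$
There is no genuine obstacle in this argument — the corollary is immediate from Lemma \ref{lem:bound} once the crude bound $P(Z_l = k) \le (lc(l))^k/k!$ is recorded — and the only point requiring a moment's care is that the assumption $0 \le k \le l$ is used twice: once to invoke Lemma \ref{lem:bound}, and once to discard the factor $(1-c(l))^{l-k}$.
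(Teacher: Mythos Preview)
Your argument is correct and matches the paper's approach: the paper states the corollary as ``an immediate consequence of Lemma \ref{lem:bound}'', and your two steps (rewriting the lemma as $P(Z_l\ge k)\le(1+lc(l)f(l))P(Z_l=k)$, then bounding $P(Z_l=k)\le (lc(l))^k/k!$ via $\binom{l}{k}\le l^k/k!$ and $(1-c(l))^{l-k}\le 1$) are exactly what makes it immediate.
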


\subsection{Proof of Lemma \ref{lem:fundamental}}
Recall that $S_{l,k}$ is the event that starting from  $l$ the process ${\bf X}$ hits $l+1$ before any of the $Y$-s hitting a value larger than $k$ where $l\geq k\geq 0$ are two positive integers.
 For notational convenience we fix $k$ and write $S_l$ for $S_{l,k}$.  By conditioning on the first step and using the Markov property we obtain the following master formula:
\begin{equation}
\label{eq:master}
\begin{split}
    P(S_l ) &= P( Z_l=0) \\
     &\quad+ P(Z_l=1) P(S_{l}) \\
     &\quad+ P(Z_l= 2) P(S_{l-1}) P(S_l)\\
     &\quad+\dots\\
     &\quad+ P(Z_l=k) \prod_{j=0}^{k-1}P(S_{l-j}).
\end{split}
\end{equation}
We will apply a bootstrapping argument on this formula in order to obtain sharp lower bounds on $P(S_l)$. 

Let  $l> \frac{k(k-1)}{2}$ be an integer. Note that under this condition, $l\ge k$.   Let 
$${\cal L}_0= l-\frac{k(k-1)}{2} > 0.$$
and we continue inductively, letting ${\cal L}_{j+1}={\cal L}_j +j$, $j=0,\dots, k-1$. Specifically, ${\cal L}_{k} = l$. 
 
We will prove by induction that for every  $j=0,\dots, k$  there exists a constant $H_j>0$ such that 
 \begin{equation}
 \label{eq:induction}
 P(S_{m})\ge 1-(lc(l))^{j+1} H_j
 \mbox{ for all }{\cal L}_j\le m\le l. 
 \end{equation}
 
 { \bf Base case $j=0$}. By  using only the first term on the RHS of \eqref{eq:master} and applying Corollary \ref{cor:binomial} we obtain
 \begin{align*}
 P(S_m)\geq& P(Z_m=0)\\
 =&1-P(Z_m\ge 1)\\
 \ge& 1- m c(m)\left(1+mc(m) f(m)\right)
 \end{align*}
 
 Setting 
 $$K_0(l)=\max \frac{m c(m)}{l c(l)},$$
where the maximum is over all $m$ in the finite range $\{{\cal L}_0,\dots, l\}$ whose cardinality is bounded above by $k^2$.   It follows from \eqref{eq:regularity} that $\limsup_{l\to\infty} K_0(l)<\infty$.
Using also that $f$ is a bounded function and that $mc(m)$ is a convergent sequence, $1+mc(m)f(m)$ is bounded.
Hence, there exists a constant $H_0$ such that
$$P(S_{m})\ge 1-lc(l) H_0\mbox{ for }{\cal L}_0\leq m\leq l.$$
The induction statement for $j=0$ holds true.  
 
{\bf Induction step.} Assume that the  induction statement  (\ref{eq:induction}) holds for $j$. We will prove it for $j+1$. Using \eqref{eq:master} up to the term corresponding to $P(Z_m=j+1)$, we have that for every $k\le m\le l$: 
 $$ P(S_m)\ge P(Z_m=0)+\sum_{i=1}^{j+1}P(Z_m=i)\prod_{0\leq h<i}P(S_{m-h}).$$
 In order to apply the induction hypothesis to this inequality the differences $m-h$ in our range must fall in the range $\{{\cal L}_j,\dots, l\}$. That is, we must have  $${\cal L}_j\le m-h\le l.$$
 As by assumption $m\le l$ and $h\ge 0$, the inequality on the right holds for all $h$. The largest value $h$ attains is $j$, 
 \begin{align*}
     m-h&\geq m-j\\
     &\geq {\cal L}_{j+1}-j\\
     &={\cal L}_{j}
 \end{align*}
  Thus, applying the induction hypothesis with $m$ in the range $\{ {\cal L}_{j+1},\dots, l\}$ we obtain 
    \begin{align*}
   P(S_m) &\ge P(Z_m=0)+ \sum_{i=1}^{j+1}P(Z_m=i) \left(1- (l c(l))^{j+1}H_j\right)^i\\   
    &\ge P(Z_m=0)+ \sum_{i=1}^{j+1}P(Z_m=i) \left(1- i(l c(l))^{j+1}H_j\right),\\
    \end{align*}
    where we used the inequality $(1-x)^i\geq 1-ix$ which is valid for any real $0\leq x\leq 1$ and any natural number 
    $i$. Hence,
    \begin{align*}
   P(S_m) & \geq \sum_{i=0}^{j+1} P(Z_m=i)- (l c(l))^{j+1} H_j  \sum_{i=1}^{j+1} iP(Z_m=i)\\
   & \geq 1- P(Z_m\ge j+2)- (l c(l))^{j+1} H_j  E(Z_m)\\
   & \geq 1-\frac{(mc(m))^{j+2}}{(j+2)!}(1+mc(m)f(m))- (l c(l))^{j+1} H_j  mc(m)\\
    \end{align*}
    To finish this induction step we use the two same observations as in the base step.
    First, $1+mc(m)f(m)$ is a bounded function. Second, it follows from \eqref{eq:regularity} and the fact that $m$ is in the finite range ${\cal L}_j\dots  l$, that there exists a constant $K_j$ such that
    $$mc(m)\leq K_j lc(l).$$ 
    Using these two observations yield the existence of a constant $H_{j+1}$ such that
    $$P(S_m)\geq 1- H_{j+1}(l c(l))^{j+2}.$$
 Now that the induction is complete, we note that for $j=k$, ${\cal L}_k=l$. Hence, the  induction statement for $j=k$  reads 
  $$P (S_l)\ge 1-H_k (lc(l))^{k+1}.$$
  This proves the lemma for all $l$ larger than $k(k-1)/2$. To complete the proof, we may need to increase $H_k$ to satisfy the inequality for all $l\ge k$. 
\bibliographystyle{amsplain}

 \end{document}